\newtheorem{theorem}{Theorem}[section]
\newtheorem{lemma}[theorem]{Lemma}
\newtheorem{definition}[theorem]{Definition}
\newtheorem{proposition}[theorem]{Proposition}
\newtheorem{remark}[theorem]{Remark}
\newtheorem{corollary}[theorem]{Corollary}
\newtheorem{conjecture}[theorem]{Conjecture}
\newtheorem{counterexample}[theorem]{Counterexample}
\numberwithin{equation}{section}
\begin{document}

\title{The second Yang-Baxter homology for the Homflypt polynomial}

\author{J\'{o}zef H. Przytycki}

\author{Xiao Wang}

\begin{abstract}

In this article, we adjust the Yang-Baxter operators constructed by Jones for the HOMFLYPT polynomal.  Then we compute the second homology for this family of Yang-Baxter operators.  It has the potential to yield 2-cocycle invariant for links. 

\end{abstract}

\subjclass[2020]{Primary: 57K10. Secondary: 16Txx.}

\maketitle

\tableofcontents

\section{Introduction}

The Yang-Baxter equation was first introduced independently in a study of many body quantum system by Yang \cite{Yan} and statistical mechanics by Baxter \cite{Bax} \footnote{The name Yang-Baxter equation was coined by Ludvig Faddeev}.  Since the discovery of the Jones polynomial \cite{Jon} in 1984, solutions to Yang-Baxter equation have become important for knot theory.  In particular, Jones \cite{Jon} and Turaev \cite{Tur} built a machinery to construct link invariants using Yang-Baxter operators and the family of Yang-Baxter operators from the representation of $A^{1}$ series lead to $sl(m)$ polynomial invariants whose ``limit" is the Homflypt polynomial \cite{Homfly,PT}. Racks and quandles give special examples of Yang-Baxter operators. Homology theory of rack and quandles were introduced in \cite{FRS-1,CJKLS}.  Carter, Elhamdadi and Saito \cite{CES} defined a (co)homology theory for set-theoretic Yang-Baxter operators generalizing this homology, and they constructed cocycle link invariants. The homology theory of general Yang-Baxter operators were developed by Lebed \cite{Leb} and Przytycki \cite{Prz} independently and this homology theory is equivalent to the one defined in \cite{CES} when restricted to the set-theoretic Yang-Baxter operators \cite{PW}.  In the first part of this paper, we give a detailed proof that after modifying the Yang-Baxter matrix obtained from $A_{1}$ series to be column unital, they are still Yang-Baxter operators.  Furthermore, this new family of operators also lead to the $sl(m)$ polynomial invariants \cite{Wan}, which is implicit in \cite{Jon}.  The homology can be defined for any column unital Yang-Baxter operators, see \cite{Prz}. In the second part of the paper, we compute the second homology of the column unital Yang-Baxter operators corresponding to $sl_{m}$ link invariants denoted by $R_{(m)}$(see Theorem \ref{Theorem 5.1}).

\section{Column unital Yang-Baxter operators}

Inspired by statistical mechanics, Jones constructed the Yang-Baxter operators leading to the Jones and HOMFLYPT polynomials, see \cite{Jon2, Tur} for more information on the use of Yang-Baxter operators in knot theory. 
\begin{definition}\label{Definition 1.1}
Let $k$ be a commutative ring and $V$ be a $k-$module.  If a $k-$linear map, $R:$ $V\otimes V \to V\otimes V$, satisfies the following equation called Yang-Baxter equation\\
$$(R\otimes Id_{V})\circ (Id_{V}\otimes R)\circ (R\otimes Id_{V})=(Id_{V}\otimes R)\circ (R\otimes Id_{V})\circ (Id_{V}\otimes R),$$\ \\
then we say $R$ is a pre-Yang-Baxter operator. If, in addition, $R$ is invertible, then we say $R$ is a Yang-Baxter operator.
\end{definition}
 Jones' Yang-Baxter operator on level $m$ is given by the following formula, 
$$R^{a b}_{c d}=\left\{
                  \begin{array}{ll}
                    -q, & \hbox{if a=b=c=d;} \\
                    1, & \hbox{if d=a $\neq$ b=c;} \\
                    q^{-1}-q, & \hbox{if c=a$<$b=d;} \\
                    0, & \hbox{otherwise.}
                  \end{array}
                \right.$$
           where $1\leq a,b,c,d \leq m$     
                
In this section, we give a detailed proof that the family of column unital operators defined in Theorem \ref{Column unital} are Yang-Baxter operators.  These operators are obtained from the Jones' Yang-Baxter operators by dividing each column by the sum of elements in the column and substitution $y^{2}=\frac{1}{1+q^{-1}-q}$.

\begin{theorem}\label{Column unital}

Let $k=\mathbb{Z}[y,y^{-1}]$, $m$ be a positive integer, and $V_{m}$ be the free $k$ module generated by the set $X_{m}=\{v_{1},...,v_{m}\}$ with ordering $v_{a}\leq v_{b}$ if and only if $a\leq b$.  Then the $k$ linear operator $R_{(m)}:V_{m}\otimes V_{m}\rightarrow V_{m}\otimes V_{m}$ given by the coefficients 

$$R^{a b}_{c d}=\left\{
                  \begin{array}{ll}
                    1, & \hbox{if d=a$\geq $b=c;} \\
                    y^2, & \hbox{if d=a$<$b=c;} \\
                    1-y^2, & \hbox{if c=a$<$b=d;} \\
                    0, & \hbox{otherwise.}
                  \end{array}
                \right.$$
               
is a Yang-Baxter operator for each $m\geq 1$.

\end{theorem}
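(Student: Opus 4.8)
The plan is to translate the matrix $R^{ab}_{cd}$ into an action on basis tensors, dispose of invertibility using the resulting block structure, and then prove the Yang-Baxter equation by reducing to a finite check on ``weight spaces.'' Reading off the four cases, $R_{(m)}$ acts by
\[
R_{(m)}(v_a\otimes v_b)=
\begin{cases}
v_a\otimes v_b, & a=b,\\
v_b\otimes v_a, & a>b,\\
(1-y^2)\,v_a\otimes v_b+y^2\,v_b\otimes v_a, & a<b,
\end{cases}
\]
so $R_{(m)}$ acts as the identity on each diagonal line $k\,v_a\otimes v_a$ and stabilizes each plane $\langle v_a\otimes v_b,\,v_b\otimes v_a\rangle$ with $a<b$, where its matrix is $\left(\begin{smallmatrix}1-y^2 & 1\\ y^2 & 0\end{smallmatrix}\right)$. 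This $2\times 2$ block has determinant $-y^2$, a unit in $k=\mathbb{Z}[y,y^{-1}]$, so $R_{(m)}$ is invertible. Equivalently, the block has characteristic polynomial $(\lambda-1)(\lambda+y^2)$, giving the global Hecke relation $(R_{(m)}-\mathrm{Id})(R_{(m)}+y^2\,\mathrm{Id})=0$ and hence the explicit inverse $R_{(m)}^{-1}=y^{-2}\bigl(R_{(m)}-(1-y^2)\,\mathrm{Id}\bigr)$, whose coefficients lie in $k$.

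For the Yang-Baxter equation itself, the structural point is that $R_{(m)}$ never alters the multiset of indices of a tensor: $R_{(m)}(v_a\otimes v_b)$ is supported on $v_a\otimes v_b$ and $v_b\otimes v_a$. Therefore both composites $(R\otimes \mathrm{Id})(\mathrm{Id}\otimes R)(R\otimes \mathrm{Id})$ and $(\mathrm{Id}\otimes R)(R\otimes \mathrm{Id})(\mathrm{Id}\otimes R)$ preserve each weight subspace $W_{\{a,b,c\}}\subseteq V_m^{\otimes 3}$ spanned by the rearrangements of $v_a\otimes v_b\otimes v_c$, and the matrices of the two composites on $W_{\{a,b,c\}}$ depend only on the order type of $(a,b,c)$. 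It thus suffices to verify the equation on one representative of each order type, reducing an identity over all $m$ to finitely many finite-dimensional checks.

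I would then run the case analysis by order type. If $a=b=c$, the weight space is a line on which both composites act as the identity. If exactly two indices agree, $W$ is $3$-dimensional with patterns $x=x<z$ and $x=x>z$; applying both composites to the three basis vectors gives equality, where the column-unital identity $y^2+(1-y^2)=1$ is exactly what makes the two coefficient sums coincide. The substantive case is three distinct indices $x<y<z$, where $W$ is $6$-dimensional and one applies each composite to all six permutation basis vectors. Writing $s=y^2$ and $t=1-y^2$, a representative computation on $v_x\otimes v_y\otimes v_z$ already returns, for both sides,
\[
(t^3+st)\,v_xv_yv_z+t^2s\,(v_yv_xv_z+v_xv_zv_y)+ts^2\,v_zv_xv_y+s^2t\,v_yv_zv_x+s^3\,v_zv_yv_x,
\]
and the remaining five basis vectors are treated identically.

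I expect this all-distinct case to be the main obstacle --- not conceptually, but in bookkeeping. The asymmetry between the rule for $a<b$ (a genuine two-term output $t\,\mathrm{id}+s\,\mathrm{swap}$) and the rule for $a>b$ (a pure transposition) means that at each of the three stages one must decide, for every term produced so far, which rule applies; a single misread inequality corrupts the entire comparison. I note that one cannot simply inherit the equation from Jones' operator, since passing to the column-unital form divides each column by its column sum, and these sums ($-q$, $1$, and $1+q^{-1}-q$) do not factor as $\delta_a\delta_b$, so the modification is not a gauge conjugation $(\Delta\otimes\Delta)(-)(\Delta\otimes\Delta)^{-1}$ under which the Yang-Baxter equation is manifestly preserved; this is why the direct verification is carried out.
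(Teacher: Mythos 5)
Your proposal is correct, and for the Yang--Baxter equation itself it follows essentially the same route as the paper: organize basis triples by order type, note that only finitely many cases need checking, and verify the identity by direct computation, with the all-distinct case done explicitly --- your displayed answer for $v_x\otimes v_y\otimes v_z$ matches the paper's computation for $(a,b,c)$, $a<b<c$, coefficient by coefficient. The genuine differences are at the edges, and they favor your write-up. For invertibility the paper merely exhibits the matrix of $R_{(m)}^{-1}$ and leaves the verification to the reader, whereas you derive it structurally: the $2\times 2$ block on $\langle v_a\otimes v_b,\, v_b\otimes v_a\rangle$ has unit determinant $-y^2$, and the Hecke relation $(R_{(m)}-\mathrm{Id})(R_{(m)}+y^2\,\mathrm{Id})=0$ gives the closed formula $R_{(m)}^{-1}=y^{-2}\bigl(R_{(m)}-(1-y^2)\,\mathrm{Id}\bigr)$, which indeed reproduces the inverse stated in the paper; this is more informative and also explains \emph{why} the inverse has the shape it does. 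For the degenerate order types (repeated indices), the paper reduces them to the six generic permutation cases via the substitution argument of Proposition \ref{leq}, while you check the $1$- and $3$-dimensional weight spaces directly; both are valid, and yours trades a small amount of computation for not having to justify that substitution is legitimate. One caveat applies equally to you and to the paper: only one of the six basis vectors of the generic weight space is computed in full, with the remaining five asserted to be ``treated identically''/``similar''; they are handled by the same method but are not literally identical (e.g.\ $(c,b,a)$ collapses to a pure permutation computation while $(a,b,c)$ is the branching-heavy one), so a complete write-up should either list them or invoke the symmetry observed in the paper's remark (switching $a\leftrightarrow c$ and reversing triples interchanges the two sides of the equation) to halve the work. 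Your closing observation --- that the column normalization of Jones' operator is not a diagonal gauge transformation, so the Yang--Baxter property cannot be inherited and must be reverified --- is consistent with the paper's counterexample showing that column normalization can destroy the Yang--Baxter property in general.
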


One can check directly that the inverse of the these operators is 

$$(R^{-1})^{a b}_{c d}=\left\{
                  \begin{array}{ll}
                    1, & \hbox{if d=a$\leq $b=c;} \\
                    y^{-2}, & \hbox{if d=a$>$b=c;} \\
                    1-y^{-2}, & \hbox{if c=a$>$b=d;} \\
                    0, & \hbox{otherwise.}
                  \end{array}
                \right.$$

Before the proof of Theorem \ref{Column unital}, we set up some notations and give Proposition \ref{leq}. Throughout the paper, we will write $R,V,X$ for $R_{(m)},V_{(m)},X_{(m)}$ defined in Theorem \ref{Column unital}, respectively. In any statement, whenever we use $R,V,X$, it implies the statement is true for $R_{(m)},V_{(m)},X_{(m)}$, $\forall m=2,3,...$. We will use integers $1\leq a,b,c\leq m$ to represent the basis elements $v_{a},v_{b},v_{c}$ and $(a,b,c)$ for the tensor product $v_{a}\otimes v_{b}\otimes v_{c}$. 

\begin{proposition}\label{leq}

$R(a,a)=(a,a)$ agrees with the formulas $R(a,b)=(1-y^{2})(a,b)+y^{2}(b,a)$ when $a<b$, $R(a,b)=(b,a)$ when $a>b$ by substituting $b=a$.

\end{proposition}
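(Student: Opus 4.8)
The plan is to read the claimed vector-level formulas directly off the matrix coefficients in Theorem~\ref{Column unital}, using the convention that $R(a,b)=\sum_{c,d}R^{ab}_{cd}\,(c,d)$, and then to check that the two off-diagonal formulas both collapse to the diagonal value under the substitution $b=a$.

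First I would split the computation of $R(a,b)$ into the three mutually exclusive cases $a>b$, $a<b$, and $a=b$, determining in each case which branch of the piecewise definition can contribute a nonzero coefficient. When $a>b$, only the branch ``$d=a\geq b=c$'' is consistent with $a>b$ (the other two nonzero branches require $a<b$), so the single surviving term is $(c,d)=(b,a)$ with coefficient $1$, giving $R(a,b)=(b,a)$. When $a<b$, the branch ``$d=a<b=c$'' yields the term $(b,a)$ with coefficient $y^{2}$, and the branch ``$c=a<b=d$'' yields the term $(a,b)$ with coefficient $1-y^{2}$; summing gives $R(a,b)=(1-y^{2})(a,b)+y^{2}(b,a)$. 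This reproduces the two formulas in the statement.

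Next I would compute the diagonal case $a=b$ directly. Setting $b=a$ in the definition, the strict-inequality branches ``$d=a<b=c$'' and ``$c=a<b=d$'' require $a<a$ and hence vanish, while the branch ``$d=a\geq b=c$'' is satisfied by the equality $a\geq a$ and contributes the single term $(a,a)$ with coefficient $1$. Thus $R(a,a)=(a,a)$, which is the asserted diagonal value.

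Finally I would verify the agreement claimed in the proposition. Substituting $b=a$ into the $a>b$ formula $R(a,b)=(b,a)$ immediately gives $(a,a)$. Substituting $b=a$ into the $a<b$ formula gives $(1-y^{2})(a,a)+y^{2}(a,a)=(1-y^{2}+y^{2})(a,a)=(a,a)$, using only that the two coefficients sum to $1$. Both substitutions therefore agree with the direct computation $R(a,a)=(a,a)$. There is no real obstacle here beyond bookkeeping; the only point requiring care is correctly identifying which branch of the piecewise definition survives under each inequality, and in particular noting that it is precisely the weak inequality ``$\geq$'' in the first branch that allows the diagonal to be absorbed into the $a>b$ case.
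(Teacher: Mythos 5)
Your proof is correct and takes essentially the same approach as the paper: the paper's one-line verification, $R(a,a)=(a,a)=(1-y^{2})(a,a)+y^{2}(a,a)$, is exactly your final substitution check, with the case-by-case reading of the piecewise coefficients left implicit. Your added derivation of the formulas $R(a,b)=(b,a)$ for $a>b$ and $R(a,b)=(1-y^{2})(a,b)+y^{2}(b,a)$ for $a<b$ from the matrix entries is sound bookkeeping but not a different method.
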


\begin{proof}

 $R(a,a)=(a,a)=(1-y^{2})(a,a)+y^{2}(a,a)$.

\end{proof}

Now, we prove Theorem \ref{Column unital}

\begin{proof}

For $m=1$, the Yang-Baxter equation hold trivially.

We consider the cases of $m\geq 2$.

Let $a\leq b\leq c$ for $a,b,c\in X_{(m)}$, then by Proposition \ref{leq}, we need to check in total six cases for the Yang-Baxter equation, which are $(a,b,c);(b,a,c);(a,c,b);(b,c,a);(c,a,b);(c,b,a)\in X_{(m)}^{3}$.   We start from the case of $(a,b,c)$.
From the left-hand-side of the Yang-Baxter equation, computing terms by terms, we get the following (see Figure \ref{al}),

\begin{figure}

\includegraphics[scale=0.19]{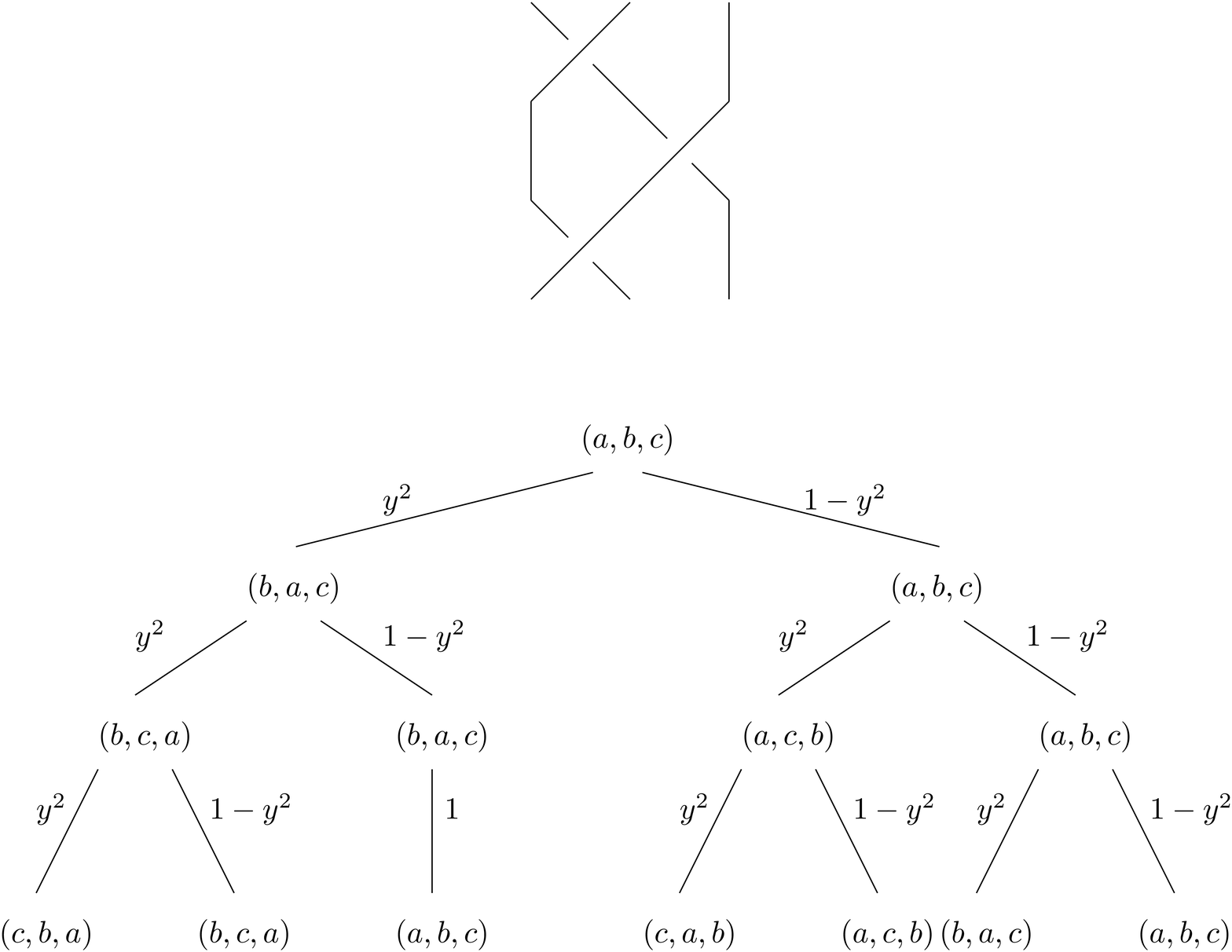} 

\caption{Computational tree for the left-hand-side of the Yang-Baxter equation of $(a,b,c)$}\label{al}

\end{figure}

$$(R\otimes Id_{V})\circ (Id_{V}\otimes R)\circ (R\otimes Id_{V})(a,b,c)=(R\otimes Id_{V})\circ (Id_{V}\otimes R)(y^{2}(b,a,c)+(1-y^{2})(a,b,c)))$$

$$(R\otimes Id_{V})\circ (Id_{V}\otimes R)(y^{2}(b,a,c))=(R\otimes Id_{V})(y^{2}y^{2}(b,c,a)+(1-y^{2})y^{2}(b,a,c))$$

$$((R\otimes Id_{V})(y^{2}y^{2}(b,c,a))=y^{2}y^{2}y^{2}(c,b,a)+(1-y^{2})y^{2}y^{2}(b,c,a)$$

$$((R\otimes Id_{V})((1-y^{2})y^{2}(b,a,c))=(1-y^{2})y^{2}(a,b,c),$$
and 

$$(R\otimes Id_{V})\circ (Id_{V}\otimes R)((1-y^{2})(a,b,c))=(R\otimes Id_{V})(y^{2}(1-y^{2})(a,c,b)+(1-y^{2})(1-y^{2}((a,b,c))$$

$$(R\otimes Id_{V})(y^{2}(1-y^{2})(a,c,b))=y^{2}y^{2}(1-y^{2})(c,a,b)+(1-y^{2})y^{2}(1-y^{2})(a,c,b)$$

$$(R\otimes Id_{V})((1-y^{2})(1-y^{2}(a,b,c))=y^{2}(1-y^{2})(1-y^{2}(b,a,c)+(1-y^{2})(1-y^{2})(1-y^{2}(a,b,c)$$

Similarly, we deal with the right-hand-side of the Yang-Baxter equation, computing terms by terms, we get the following (see Figure \ref{ar}),

\begin{figure}

\includegraphics[scale=0.19]{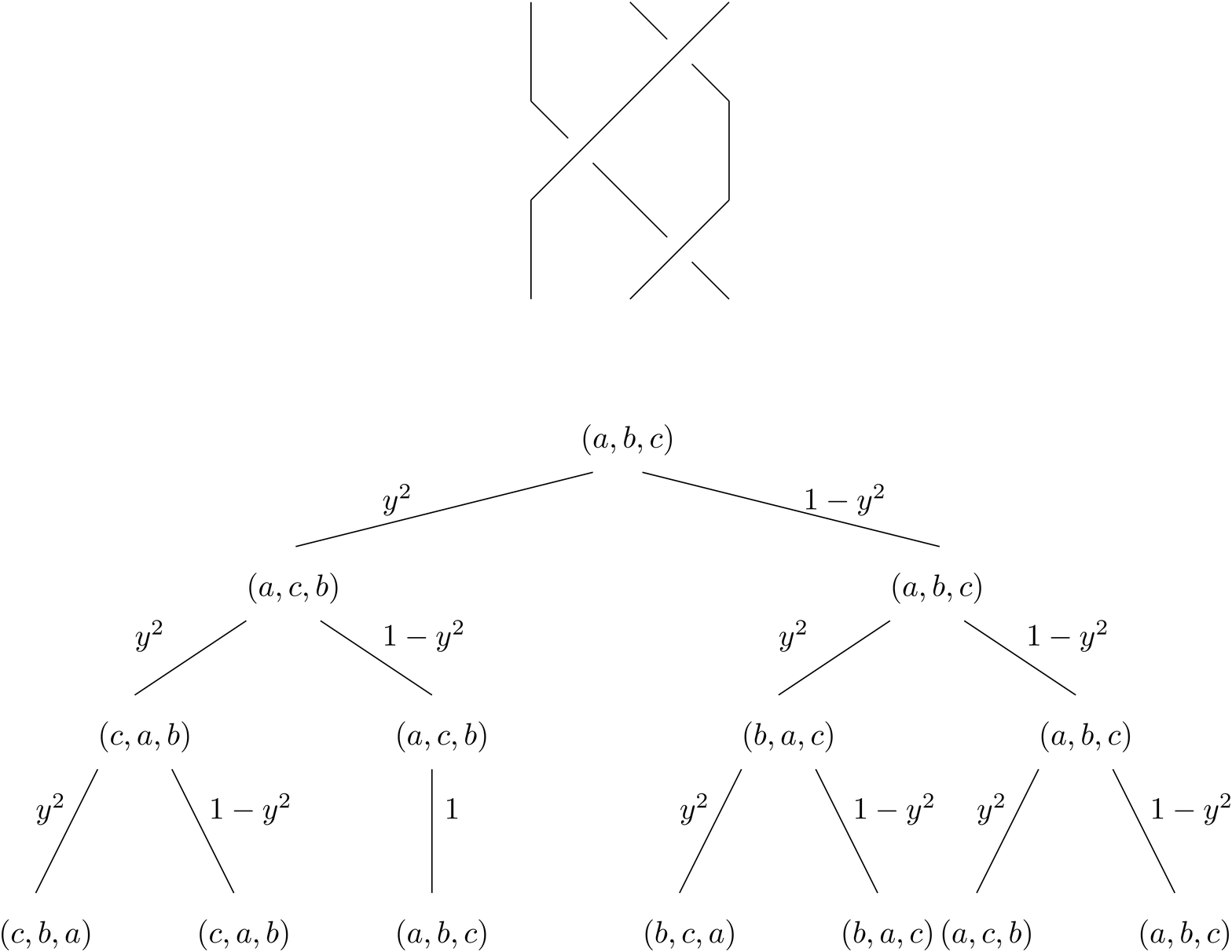} 

\caption{Computational tree for the right-hand-side of the Yang-Baxter equation of $(a,b,c)$}\label{ar}

\end{figure}

$(Id_{V}\otimes R)\circ (R\otimes Id_{V})\circ (Id_{V}\otimes R)(a,b,c)=(Id_{V}\otimes R)\circ (R\otimes Id_{V})(y^{2}(a,c,b)+(1-y^{2}(a,b,c)))$

$$(Id_{V}\otimes R)\circ (R\otimes Id_{V})(y^{2}(a,c,b))=Id_{V}\otimes R)(y^{2}y^{2}(c,a,b)+(1-y^{2})y^{2}(a,c,b))$$

$$(Id_{V}\otimes R)(y^{2}y^{2}(c,a,b))=y^{2}y^{2}y^{2}(c,b,a)+(1-y^{2})(1-y^{2})y^{2}(c,a,b)$$

$$(Id_{V}\otimes R)((1-y^{2})y^{2}(a,c,b))=(1-y^{2})y^{2}(a,b,c),$$

and

$$(Id_{V}\otimes R)\circ (R\otimes Id_{V})((1-y^{2}(a,b,c))=Id_{V}\otimes R)(y^{2}(1-y^{2}(b,a,c)+(1-y^{2})(1-y^{2}(a,b,c))$$

$$Id_{V}\otimes R)(y^{2}(1-y^{2}(b,a,c))=y^{2}y^{2}(1-y^{2}(b,c,a)+(1-y^{2})y^{2}(1-y^{2}(b,a,c)$$

$$Id_{V}\otimes R)((1-y^{2})(1-y^{2}(a,b,c))=y^{2}(1-y^{2})(1-y^{2}(a,c,b)+(1-y^{2})(1-y^{2})(1-y^{2}(a,b,c)$$

Both expressions are equal, thus prove Yang-Baxter equation holds for $(a,b,c)$. The other cases can be checked directly in a similar way. 

\end{proof}

\begin{remark}

From our proof and Figure \ref{al} and Figure \ref{ar}, we can conclude more.
$$(R\otimes Id_{V})\circ (Id_{V}\otimes R)\circ (R\otimes Id_{V})(a,b,c)=$$ 
$$[[y^{2}y^{2}y^{2}(c,b,a)+(1-y^{2})y^{2}y^{2}(b,c,a)]+[(1-y^{2})y^{2}(a,b,c)]]+$$ 
$$[[y^{2}y^{2}(1-y^{2})(c,a,b)+(1-y^{2})y^{2}(1-y^{2})(a,c,b)]+[y^{2}(1-y^{2})(1-y^{2}(b,a,c)+(1-y^{2})(1-y^{2})(1-y^{2}(a,b,c)]]$$

$$(Id_{V}\otimes R)\circ (R\otimes Id_{V})\circ (Id_{V}\otimes R)(a,b,c)=$$ 
$$[[y^{2}y^{2}y^{2}(c,b,a)+(1-y^{2})(1-y^{2})y^{2}(c,a,b)]+[(1-y^{2})y^{2}(a,b,c)]]+$$ 
$$[[y^{2}y^{2}(1-y^{2}(b,c,a)+(1-y^{2})y^{2}(1-y^{2}(b,a,c)]+[y^{2}(1-y^{2})(1-y^{2}(a,c,b)+(1-y^{2})(1-y^{2})(1-y^{2}(a,b,c)]]$$

Terms in the sum correspond to the leaves of the computational tree.  Square brackets group terms according the structure of the tree (see Figure \ref{al} and Figure \ref{ar}).

Important observation is that if we transform the result of the left-hand-side of $(a,b,c)$ by first switching the position of $a$ and $c$ and then reversing the order of the triple, we obtain exactly the result of right-hand-side of $(a,b,c)$ square bracket-wisely.  This observation can actually reduce the number of cases to check, which is important for us to compute the higher level homology in the future.  

\end{remark}

As mentioned before, the family of Yang-Baxter operators, $R_{(m)}$, have the property that summation of elements in each column of the matrix presentation equals to $1$. They are obtained from the Yang-Baxter operators leading to the Jones and HOMFLYPT polynomials \cite{Jon,Tur} by normalizing each column. 
However, normalizing columns of Yang-Baxter operators does not always produce Yang-Baxter operators in general.  

\begin{counterexample}

The following Yang-Baxter operator leading to the Kauffman two-variable polynomial (see \cite{Tur} for detail) with substitution $m=4$, $\nu =-1$ is a counterexample.

\begin{equation*}       %开始数学环境
\begin{tiny}
\left(                 %左括号
  \begin{array}{cccccccccccccccc}   %该矩阵一共16列，每一列都居中放置
 
 q & 0 & 0 & 0 & 0 & 0 & 0 & 0 & 0 & 0 & 0 & 0 & 0 & 0 & 0 & 0\\
 0 & q - q^{-1} & 0 & 0 & 1 & 0 & 0 & 0 & 0 & 0 & 0 & 0 & 0 & 0 & 0 & 0\\
 0 & 0 & q - q^{-1} & 0 & 0 & 0 & 0 & 0 & 1 & 0 & 0 & 0 & 0 & 0 & 0 & 0\\
 0 & 0 & 0 & q - 2 q^{-1} + (q^{-3}) & 0 & 0 & (q^{-2} - 1) & 0 & 0 & (q^{-2} - 1) & 0 & 0 & q^{-1} & 0 & 0 & 0\\
 0 & 1 & 0 & 0 & 0 & 0 & 0 & 0 & 0 & 0 & 0 & 0 & 0 & 0 & 0 & 0\\
 0 & 0 & 0 & 0 & 0 & q & 0 & 0 & 0 & 0 & 0 & 0 & 0 & 0 & 0 & 0\\
 0 & 0 & 0 & (q^{-2} - 1) & 0 & 0 & 0 & 0 & 0 & q^{-1} & 0 & 0 & 0 & 0 & 0 & 0\\
 0 & 0 & 0 & 0 & 0 & 0 & 0 & q - q^{-1} & 0 & 0 & 0 & 0 & 0 & 1 & 0 & 0\\
 0 & 0 & 1 & 0 & 0 & 0 & 0 & 0 & 0 & 0 & 0 & 0 & 0 & 0 & 0 & 0\\
 0 & 0 & 0 & (q^{-2} - 1) & 0 & 0 & q^{-1} & 0 & 0 & 0 & 0 & 0 & 0 & 0 & 0 & 0\\
 0 & 0 & 0 & 0 & 0 & 0 & 0 & 0 & 0 & 0 & q & 0 & 0 & 0 & 0 & 0\\
 0 & 0 & 0 & 0 & 0 & 0 & 0 & 0 & 0 & 0 & 0 & q - q^{-1} & 0 & 0 & 1 & 0\\
 0 & 0 & 0 & q^{-1} & 0 & 0 & 0 & 0 & 0 & 0 & 0 & 0 & 0 & 0 & 0 & 0\\
 0 & 0 & 0 & 0 & 0 & 0 & 0 & 1 & 0 & 0 & 0 & 0 & 0 & 0 & 0 & 0\\
 0 & 0 & 0 & 0 & 0 & 0 & 0 & 0 & 0 & 0 & 0 & 1 & 0 & 0 & 0 & 0\\
 0 & 0 & 0 & 0 & 0 & 0 & 0 & 0 & 0 & 0 & 0 & 0 & 0 & 0 & 0 & q\\

  \end{array}
\right)                 %右括号
\end{tiny}
\end{equation*}

This matrix as a $k-$linear operator from $V\otimes V$ to $V\otimes V$, with $k=\mathbb{Z}[q,q^{-1}]$ and $V=k\{v_{1},v_{2},v_{3},v_{4}\}$ the free $k-$module generated by four elements, is a Yang-Baxter operator with the standard basis in tensor product of $V\otimes V$.  However, if we divide the elements of each column by the summation of the elements in the corresponding column, it is no longer a Yang-Baxter operator. We have checked this by using Mathematica directly.

\end{counterexample}

\section{Computation of homology for Yang-Baxter operators leading to Homflypt polynomial}\label{Section 5}

In this section, we are interested in the second homology of $R_{(m)}$. First we recall the definition of Yang-Baxter homology for column unital operators.
Let $k$ be a commutative ring, $V=k$X be the free $k-$module generated by basis in $X$, and let the chain modules be $C_{n}(R)=V^{\otimes n}.$  The boundary homomorphism $\partial_{n}:C_{n}(R)\to C_{n-1}(R)$ is given as follows,  $$\partial_{n}=\sum_{i=1}^{n} (-1)^{i}(d^{l}_{i,n}-d^{r}_{i,n}).$$ The face maps $d^{l}_{i,n}$ and $d^{r}_{i,n}$ are illustrated in Figure \ref{face}, where going from top to bottom, and whenever we meet a crossing we apply the Yang-Baxter operator $R$ and we delete the first tensor factor or the last tensor factor at the bottom for $d^{l}_{i,n}$ and $d^{r}_{i,n}$, respectively.  See \cite{Prz,PW} for detail.

\begin{figure}
\includegraphics[scale=1]{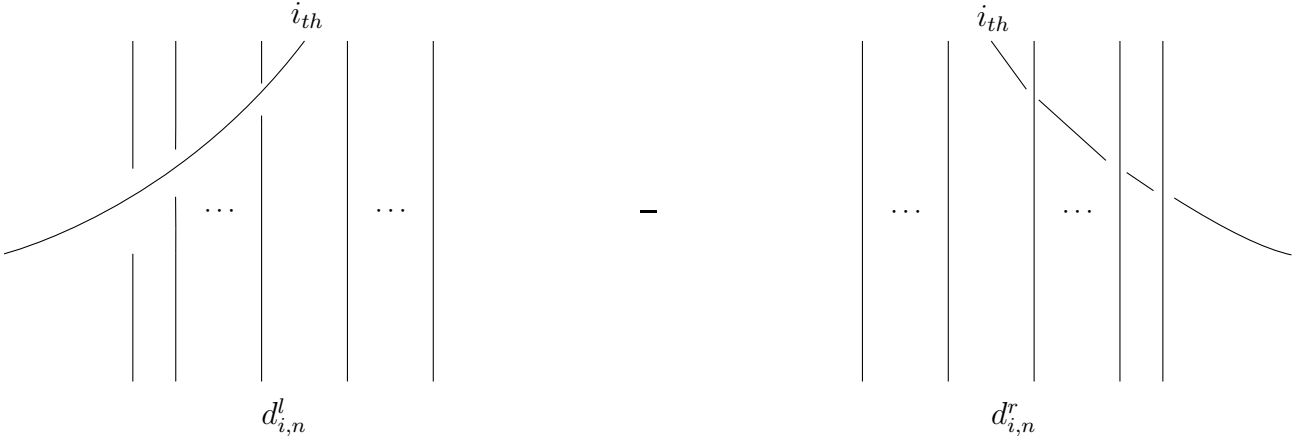} 
\caption{Face maps} \label{face}
\end{figure}

Consider pre-Yang-Baxter operators $R:V \otimes V \to V \otimes V$ given on the basis $X^{2}$ by 
$$R(a,b)=\sum_{c,d} R^{a,b}_{c,d}\cdot (c,d)$$ 
with column unital condition, that is $\sum_{c,d} R^{a,b}_{c,d}=1$ for every $(a,b)\in X^2$. Now $C_1(R)=V$ and $C_2(R)=V^{\otimes 2}$ and
$$\partial_2(a,b)= \sum_{i=1}^2(-1)^i(d^{\ell}_i(a,b) -d^r_i(a,b)) = $$ 
$$(b) - \sum_{c,d}R^{a,b}_{c,d}(d) - \bigg(\sum_{c,d}R^{a,b}_{c,d}(c) - (a)\bigg)= $$
$$(a)+(b) - \sum_{c,d}R^{a,b}_{c,d}((c)+(d))$$
and
$$\partial_3(a,b,c)= \sum_{i=1}^3(-1)^i(d^{\ell}_i(a,b,c) -d^r_i(a,b,c)).$$
Now we go back to analysis of the chain complex for the column unital matrices $R_{(m)}$ in Theorem \ref{Column unital}.  Recall that
$$R^{a b}_{c d}=\left\{
                  \begin{array}{ll}
                    1, & \hbox{if d=a$\geq $b=c;} \\
                    y^2, & \hbox{if d=a$<$b=c;} \\
                    1-y^2, & \hbox{if c=a$<$b=d;} \\
                    0, & \hbox{otherwise.}
                  \end{array}
                \right.$$
In particular, for $m=2$ we have the matrix
\[ R_{(2)}=
\left[
	\begin{array}{cccc}
		1 & 0     & 0 & 0 \\
		0 & 1-y^2 & 1 & 0 \\
		0 & y^2   & 0 & 0 \\
		0 & 0     & 0 & 1 
	\end{array}
	\right],
R_{(2)}^{-1}= \left[
        \begin{array}{cccc}
                1 & 0     & 0        & 0 \\
		0 & 0     & y^{-2}   & 0 \\
		0 & 1     & 1-y^{-2} & 0 \\
                0 & 0     & 0        & 1 
        \end{array}
        \right]
	\]
In Lemma \ref{boundary2}, we prove that the second boundary map is trivial.	
\begin{lemma}\label{boundary2}

For the family of column unital Yang-Baxter operators in Theorem \ref{Column unital}, $\partial_{2}=0$ and $$H_1(R)= C_1(R_{(m)})= V$$ and $$ker\partial_2= C_2(R_{(m)})=V^{\otimes 2}$$

\end{lemma}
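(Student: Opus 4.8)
The plan is to work directly from the explicit expression for $\partial_2$ recorded just before the statement, namely
$$\partial_2(a,b) = (a)+(b) - \sum_{c,d} R^{a,b}_{c,d}\big((c)+(d)\big),$$
and to show that the subtracted sum always equals $(a)+(b)$, so that $\partial_2(a,b)=0$ on every basis element $(a,b)\in X^2$, whence $\partial_2=0$. The entire argument rests on one structural feature of the operators $R=R_{(m)}$ of Theorem \ref{Column unital}: reading off the four cases of the defining formula, every \emph{nonzero} coefficient $R^{a,b}_{c,d}$ occurs only for an output pair $(c,d)$ that is a permutation of the input pair $(a,b)$ — namely $(c,d)=(a,b)$ in the $1-y^2$ branch, and $(c,d)=(b,a)$ in the $1$ and $y^2$ branches. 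Consequently the multiset $\{c,d\}$ equals $\{a,b\}$ for every contributing term, and therefore $(c)+(d)=(a)+(b)$ in $V$ for each such term.

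First I would make this permutation observation precise by running through the three nonzero branches of the formula (together with the boundary case $a=b$, where $R(a,a)=(a,a)$ by Proposition \ref{leq}, and the case $a>b$, to confirm that no output outside $\{(a,b),(b,a)\}$ ever appears). Using $(c)+(d)=(a)+(b)$ to pull the common factor out of the sum gives
$$\sum_{c,d} R^{a,b}_{c,d}\big((c)+(d)\big) = \Big(\sum_{c,d} R^{a,b}_{c,d}\Big)\big((a)+(b)\big).$$
I would then invoke the column unital property of $R_{(m)}$, that is $\sum_{c,d} R^{a,b}_{c,d}=1$ for every $(a,b)$, so that the right-hand side is exactly $(a)+(b)$. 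Substituting back yields $\partial_2(a,b)=(a)+(b)-\big((a)+(b)\big)=0$, and since this holds on every basis element we conclude $\partial_2=0$.

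The two homological consequences are then immediate and formal. Because $\partial_2=0$ as a map $C_2(R_{(m)})\to C_1(R_{(m)})$, its kernel is all of $C_2(R_{(m)})=V^{\otimes 2}$, and its image in $C_1$ is zero. Moreover the first boundary map vanishes as well: for $n=1$ the left and right face maps both simply delete the single tensor factor, so $d^{\ell}_{1}=d^{r}_{1}$ and hence $\partial_1=0$. Therefore
$$H_1(R)=\ker\partial_1/\operatorname{im}\partial_2 = C_1(R_{(m)})/0 = V.$$

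I do not expect any genuine obstacle here: the only point requiring care is the case-by-case verification of the permutation property across the branches of the formula, since the conclusion depends entirely on no output pair $(c,d)$ escaping $\{(a,b),(b,a)\}$. Once that property and column unitality are both in hand, the vanishing of $\partial_2$ is forced, and the statements about $H_1(R)$ and $\ker\partial_2$ follow with no further computation.
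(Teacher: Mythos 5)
Your proposal is correct and follows essentially the same route as the paper's proof: both rest on the observation that $R^{a,b}_{c,d}=0$ unless $\{c,d\}=\{a,b\}$, combined with the column unital property $\sum_{c,d}R^{a,b}_{c,d}=1$, to force $\partial_2(a,b)=0$ on basis elements. Your explicit remark that $\partial_1=0$ (since $d^{\ell}_1=d^r_1$ on $C_1$) is a small useful addition that the paper leaves implicit, but it does not change the substance of the argument.
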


\begin{proof}

We check now that $\partial_2(a,b)=0$ for any pair $a,b\in X^2$.
The main reason for $\partial_2=0$ is that if $\{a,b\}\neq \{c,d\}$ then $R^{a,b}_{c,d}=0$ and the column unital property. 
That is for $a,b \in X$:
$$\sum_{c,d}R^{a,b}_{c,d}(c,d)= R^{a,b}_{a,b}(a,b) + R^{a,b}_{b,a}(b,a) \mbox{ with } R^{a,b}_{a,b}+ R^{a,b}_{b,a}=1.$$ 
so $\partial_2(a,b)= (a)+(b) - (R^{a,b}_{a,b}+ R^{a,b}_{b,a})((a)+(b))=0$.\\
Thus 
$$H_1(R_{(m)})= C_1(R_{(m)})=V$$ and $$ker\partial_2= C_2(R_{(m)})=V^2.$$ 
\end{proof}
To compute $H_2(R_{(m)})$, we need to understand $im \partial_3$. The following lemma will be used later in computation,

\begin{lemma}\label{boundary3} \
For the column unital Yang-Baxter operators in Theorem \ref{Column unital}, we have\
\begin{enumerate}

\item  $\partial_{3}(v_{m},a_{1},a_{2})=0$ and $\partial_{3}(a_{1},a_{2},v_{1})=0$, for all $a_{1},a_{2}\in X$, where as before $v_{m}$ is the largest element and $v_{1}$ is the smallest element in $X$;

\item $\partial_{3}(a_{1},a_{2},a_{3})=0$ if either $a_{1}\geq a_{i}$ for all $i=1,2,3$ or $a_{3}\leq a_{j}$ for all $j=1,2,3$, for all $a_{1},a_{2},a_{3}\in X$.

\end{enumerate}

\end{lemma}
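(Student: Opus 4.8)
The plan is to prove part (2) and then read off part (1) as a special case. The two identities in part (1) are exactly the two hypotheses of part (2): in $\partial_3(v_m,a_1,a_2)$ the first entry $v_m$ is the global maximum, hence $v_m\ge v_m,a_1,a_2$, which is the condition ``$a_1\ge a_i$ for all $i$''; and in $\partial_3(a_1,a_2,v_1)$ the last entry $v_1$ is the global minimum, which is the condition ``$a_3\le a_j$ for all $j$''. So it suffices to treat part (2).

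For part (2) I would expand $\partial_3(a_1,a_2,a_3)=\sum_{i=1}^{3}(-1)^i(d^{\ell}_{i,3}-d^{r}_{i,3})(a_1,a_2,a_3)$ into its six face-map terms, each obtained by resolving the crossings with $R$ along a computational tree and then deleting the appropriate end factor, exactly in the style of Figures \ref{al}, \ref{ar} and Figure \ref{face}. The one structural fact that drives everything is immediate from the defining formula together with Proposition \ref{leq}: the operator acts by a \emph{pure transposition}, $R(x,y)=(y,x)$, whenever $x\ge y$, and only when $x<y$ does the genuine splitting $R(x,y)=(1-y^2)(x,y)+y^2(y,x)$ occur.

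Assume first that $a_1$ is the largest of the three. Then $a_1$ never meets a strictly larger strand, so at every crossing in which it participates it passes straight through by a transposition; that is, $a_1$ behaves as a non-interacting spectator sitting in the leftmost tensor slot throughout all six face maps. Consequently the two face maps that delete $a_1$ itself produce equal outputs and cancel, while the four remaining terms carry $a_1$ untouched and reorganize into $(a_1)\otimes\partial_2(a_2,a_3)$, which is $0$ by Lemma \ref{boundary2}. The case where $a_3$ is the smallest is mirror-symmetric: now $a_3$ never meets a strictly smaller strand, the two face maps deleting $a_3$ cancel, and the remainder collapses to $\pm\,\partial_2(a_1,a_2)\otimes(a_3)=0$, again by Lemma \ref{boundary2}.

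I expect the only delicate point to be the first step: pinning down each of the six face maps $d^{\ell}_{i,3},d^{r}_{i,3}$ (the order in which the two crossings are resolved and which factor is deleted) and confirming, under each hypothesis, precisely which crossings become deterministic. Once the spectator strand is identified, the pairwise cancellation and the factorization through $\partial_2$ are mechanical, and the statement follows from Lemma \ref{boundary2}. The computational-tree bookkeeping emphasized in the Remark is exactly the tool that keeps this transparent, and it is what the authors flag there as useful for the higher-homology computations.
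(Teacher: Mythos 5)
Your proposal is correct --- the cancellations and factorizations you describe all go through --- but it organizes the argument in the opposite direction from the paper. The paper proves part (1) first, by exactly the decomposition you outline: it writes $\partial_{3}(v_{m},a_{1},a_{2})=[d_{1}^{l}-d_{1}^{r}](v_{m},a_{1},a_{2})-v_{m}\otimes \partial_{2}(a_{1},a_{2})$, kills the second term by Lemma \ref{boundary2}, and kills the first because $R(v_m,x)=(x,v_m)$ is a pure transposition whenever $v_m\geq x$; it then deduces part (2) from part (1) by restricting to the subchain complex spanned by $\{v_1,\dots,a_1\}$ (resp.\ $\{a_3,\dots,v_m\}$), inside which $a_1$ (resp.\ $a_3$) \emph{is} the global extremum. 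You instead run the spectator computation directly on a local maximum/minimum and read off part (1) as the special case $a_1=v_m$ (resp.\ $a_3=v_1$). Both routes rest on the same structural fact, which you should state explicitly since it is the only thing separating ``global maximum'' from ``maximum among the three letters'': $R^{a,b}_{c,d}=0$ unless $\{c,d\}=\{a,b\}$, so every strand that $a_1$ ever crosses carries a letter from $\{a_2,a_3\}$ and hence is never strictly larger. This letter-preservation is precisely what makes your ``never meets a strictly larger strand'' claim valid at \emph{every} crossing of every computational tree, and it is also what makes the paper's subcomplexes well defined, so neither route escapes it. What your organization buys is transparency that only the relative order of the three letters matters, with no need to set up subcomplexes; what the paper's buys is that the six-face-map computation is done once, in the extreme case, and part (2) follows formally. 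Since you flagged the face-map bookkeeping as the delicate point: the four terms not deleting the spectator do factor as claimed ($d_2^{l}$, $d_2^{r}$, $d_3^{l}$, $d_3^{r}$ applied to $(a_1,a_2,a_3)$ give $a_1\otimes d_1^{l}$, $a_1\otimes d_1^{r}$, $a_1\otimes d_2^{l}$, $a_1\otimes d_2^{r}$ applied to $(a_2,a_3)$, with the index shift turning the $i=2,3$ part of $\partial_3$ into $\pm a_1\otimes\partial_2$), so your plan is sound as stated.
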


\begin{proof}\

Part $(1)$ follows from Lemma \ref{boundary2}.  $\partial_{3}(v_{m},a_{1},a_{2})=[d_{1}^{l}-d_{1}^{r}](v_{m},a_{1},a_{2})-v_{m}\otimes \partial_{2}(a_{1},a_{2})$, by Lemma \ref{boundary2}, $\partial_{3}(v_{m},a_{1},a_{2})=[d_{1}^{l}-d_{1}^{r}](v_{m},a_{1},a_{2})$.  Note that $R(a_{1},a_{2})=(a_{2},a_{1})$ whenever $a_{1}\geq a_{2}$,  $\partial_{3}(v_{m},a_{1},a_{2})=[d_{1}^{l}-d_{1}^{r}](v_{m},a_{1},a_{2})=(a_{1},a_{2})-(a_{1},a_{2})=0$.  Similarly, $\partial_{3}(a_{1},a_{2},v_{1})=0$.\

Part $(2)$ follows from part $(1)$ by considering the subchain complex given by the subspace $\{v_{1},v_{2},...,a_{1}\}$ of $V_{m}$ or $\{a_{3}...,a_{m-1},a_{m}\}$ of $V_{m}$ respectively.

\end{proof}

\ \\

The main result of this section is as follows.  Notice that the ring $k$ can be either $\mathbb{Z}[y^{\pm}]$ or $\mathbb{Z}[y]$.

\begin{theorem}\label{Theorem 5.1} Let $R$ be a unital Yang-Baxter operator giving 
	Homflypt polynomial on level $m$ in Theorem \ref{Column unital}, then 
	$$H_2(R)= k^{1+{m \choose 2}} \oplus \bigg(k/(1-y^2)\bigg)^{m \choose 2} \oplus \bigg(k/(1-y^4)\bigg)^{m-1}.$$
\end{theorem}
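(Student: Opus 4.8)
The plan is to reduce everything to a single quotient module. By Lemma \ref{boundary2} we have $\ker\partial_2=C_2(R_{(m)})=V^{\otimes 2}$, so $H_2(R)=V^{\otimes 2}/\operatorname{im}\partial_3$, and the whole problem becomes the identification of the submodule $\operatorname{im}\partial_3\subseteq V^{\otimes 2}$ together with the computation of the quotient as a $k$-module. First I would write down a closed form for $\partial_3(a,b,c)$ on each basis triple. Exactly as in the Remark after Theorem \ref{Column unital}, each face map $d^l_{i,3},d^r_{i,3}$ is read off the leaves of the computational tree (Figures \ref{al}, \ref{ar}), and only the marginals of $R$ (the sums obtained after deleting one tensor factor) enter. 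Lemma \ref{boundary3} annihilates almost every triple, leaving only those whose first entry is not the maximum and whose last entry is not the minimum: for distinct $a<b<c$ these are $(a,b,c),(a,c,b),(b,a,c)$, and for a repeated value the survivors are $(p,p,q)$ and $(p,q,q)$ with $p<q$. I expect all these boundaries to organise themselves around the vectors $w_{xy}:=(x,y)+y^2(y,x)$, giving $\partial_3(p,p,q)=(1-y^2)w_{pq}-(1-y^4)(p,p)$ and $\partial_3(p,q,q)=(1-y^4)(q,q)-(1-y^2)w_{pq}$, while each distinct triple gives $(1-y^2)$ times a difference of two $w$'s.

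Next I would pass to the $k$-basis $\{w_{xy}:x<y\}\cup\{(y,x):x<y\}\cup\{(a,a):1\le a\le m\}$ of $V^{\otimes 2}$; the change from the standard basis is unimodular over $k$, so nothing is lost. The key structural point is that, once the boundaries are rewritten in this basis, none of them involves the strictly lower vectors $(y,x)$ with $x<y$; these $\binom{m}{2}$ generators therefore split off as a free summand $k^{\binom{m}{2}}$, accounting for the bulk of the free part. It then remains to compute the quotient $M$ of the free module on $\{w_{xy}\}\cup\{e_a:=(a,a)\}$ by the surviving relations. I would first show the distinct-triple relations are redundant, via the exact identity $\partial_3(a,b,c)=\partial_3(b,b,c)+\partial_3(a,b,b)$ (and its analogues), so that $\operatorname{im}\partial_3$ is already generated by the $2\binom{m}{2}$ vectors $g_{pq}:=\partial_3(p,p,q)$ and $h_{pq}:=\partial_3(p,q,q)$. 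Replacing $h_{pq}$ by $g_{pq}+h_{pq}=(1-y^4)(e_q-e_p)$ splits the relations into a diagonal family $(1-y^4)(e_q-e_p)=0$ and the family $g_{pq}=(1-y^2)w_{pq}-(1-y^4)e_p=0$.

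Finally I would diagonalise this presentation by two unimodular substitutions. Setting $f_1:=e_1$ and $f_a:=e_a-e_{a-1}$ turns the diagonal relations into $(1-y^4)f_a=0$ for $a=2,\dots,m$, which decouples $f_2,\dots,f_m$ and contributes $(k/(1-y^4))^{m-1}$; reducing $g_{pq}$ modulo these gives $g_{pq}\equiv(1-y^2)w_{pq}-(1-y^4)f_1$, and the substitution $u_{pq}:=w_{pq}-(1+y^2)f_1$ (unimodular, using $1-y^4=(1-y^2)(1+y^2)$) converts each remaining relation into $(1-y^2)u_{pq}=0$ while leaving $f_1$ free. Assembling gives $M\cong k\oplus(k/(1-y^2))^{\binom{m}{2}}\oplus(k/(1-y^4))^{m-1}$, and restoring the free $k^{\binom{m}{2}}$ from the $(y,x)$ generators yields the asserted form of $H_2(R)$. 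The main obstacle is not the homological algebra but the module theory over $k=\mathbb{Z}[y^{\pm}]$ (or $\mathbb{Z}[y]$), which is \emph{not} a principal ideal domain: one cannot invert $1-y^2$ nor clear denominators, so the quotient must be computed by the explicit unimodular changes of basis above, and one must verify that each is invertible over $k$ and that the relations truly split into the claimed cyclic summands rather than interacting. It is precisely the redundancy of the distinct-triple relations that guarantees no torsion beyond $1-y^2$ and $1-y^4$ survives.
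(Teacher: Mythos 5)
Your proposal is correct and takes essentially the same route as the paper's own proof: the same reduction of $H_2$ to $C_2/\mathrm{im}\,\partial_3$ via Lemma \ref{boundary2}, the same five surviving boundaries from Lemma \ref{boundary3}, the same redundancy identity $\partial_3(a,b,c)=\partial_3(b,b,c)+\partial_3(a,b,b)$, the same replacement of $\partial_3(p,q,q)$ by $\partial_3(p,p,q)+\partial_3(p,q,q)=(1-y^4)\big((q,q)-(p,p)\big)$, and finally a unimodular change of basis of $C_2$ in which the generators of $\mathrm{im}\,\partial_3$ become diagonal. The only (immaterial) difference is the choice of diagonalizing basis: you use $u_{pq}=w_{pq}-(1+y^2)(1,1)$ together with the telescoped diagonal elements $f_a=(a,a)-(a-1,a-1)$, which forces one extra reduction of the generators $g_{pq}$ modulo the $(1-y^4)$-relations, whereas the paper's basis $X_1=\{w_{ij}-(1+y^2)(i,i)\}$, $X_2=\{(j,j)-(1,1)\}$ makes both families of generators diagonal at once.
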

\begin{proof}

First, we compute $\partial_{3}$.\
Let $a<b<c$, we need to consider $13$ cases, which are:$$(a,b,c);(b,c,a);(c,a,b);(b,a,c);(a,c,b);(c,b,a);(a,a,b);(b,b,a);(a,b,b);(b,a,a);(a,b,a);(b,a,b);(a,a,a)$$
By Lemma \ref{boundary3}, we have $\partial_3(b,c,a)=\partial_3(c,a,b)=\partial_3(c,b,a)=\partial_{3}(b,b,a)=\partial_{3}(b,a,a)=\partial_{3}(a,b,a)=\partial_{3}(b,a,b)=\partial_{3}(a,a,a)=0$.

$\partial_{3}$ provides non-trivial relations in the homology for the $5$ remaining cases (however, they are not all linearly independent). Let us demonstrate the calculation of $\partial_{3}(a,b,c)$.

We make calculation easy by considering graphical interpretation of face maps $d_i^{\epsilon}$, starting from the defining formula:
$$\partial_3=d^{\ell}_1+d_2^r+d_3^{\ell}-\bigg(d^r_3+d_2^{\ell}+d_1^r\bigg).$$

\center{
$\partial_3(a,b,c)=
{\parbox{1.2cm}{\psfig{figure=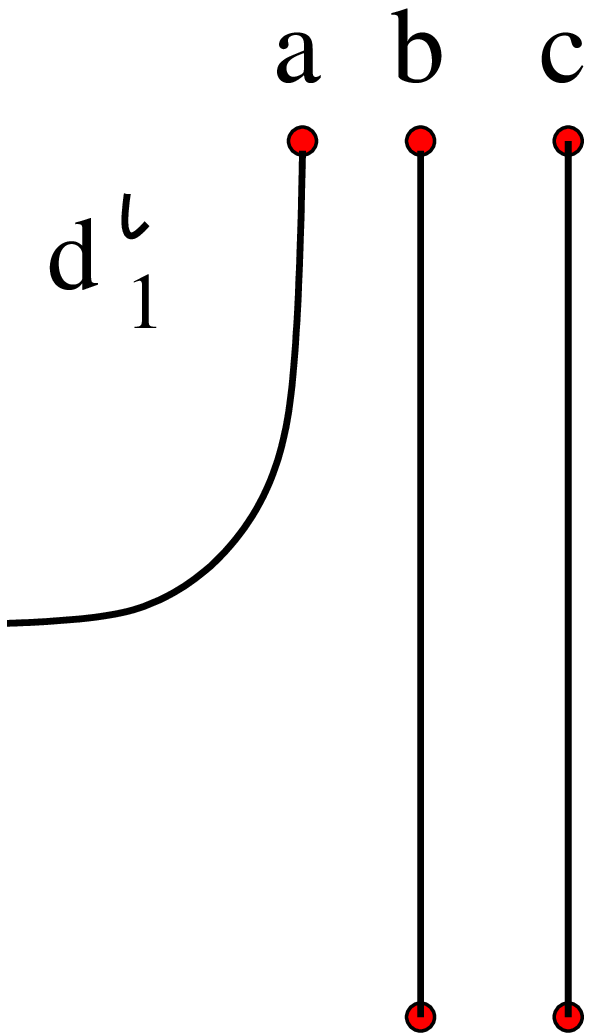,height=1.7cm}}}+ {\parbox{1.2cm}{\psfig{figure=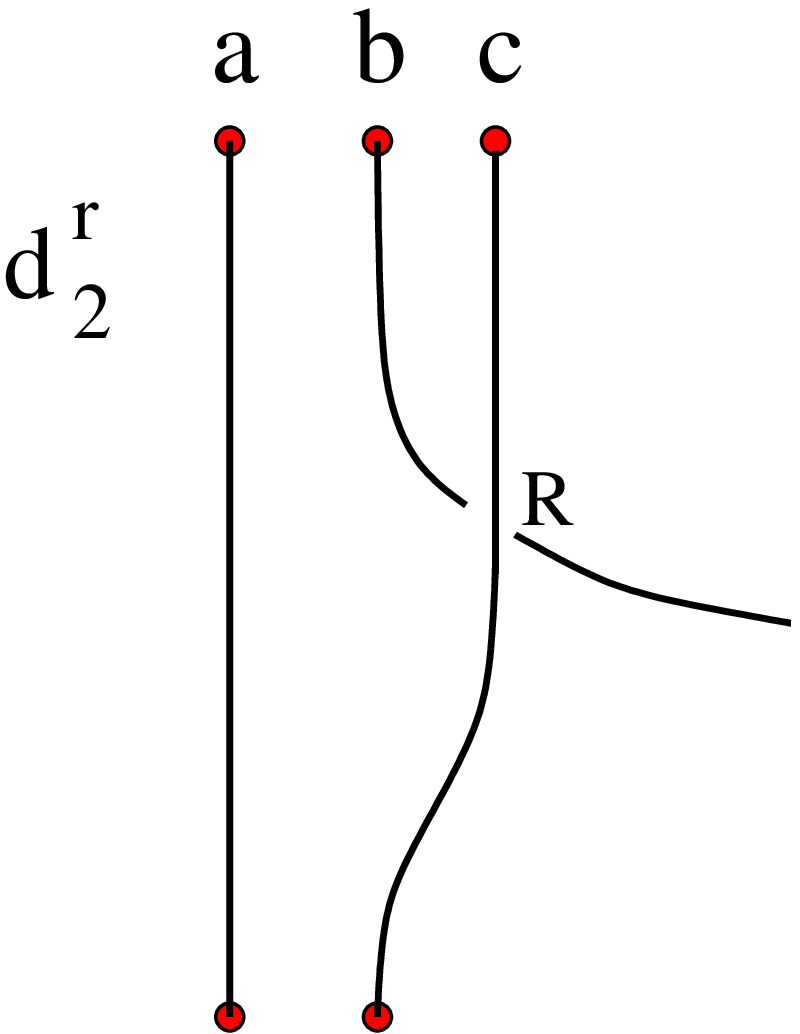,height=1.7cm}}}+
{\parbox{1.2cm}{\psfig{figure=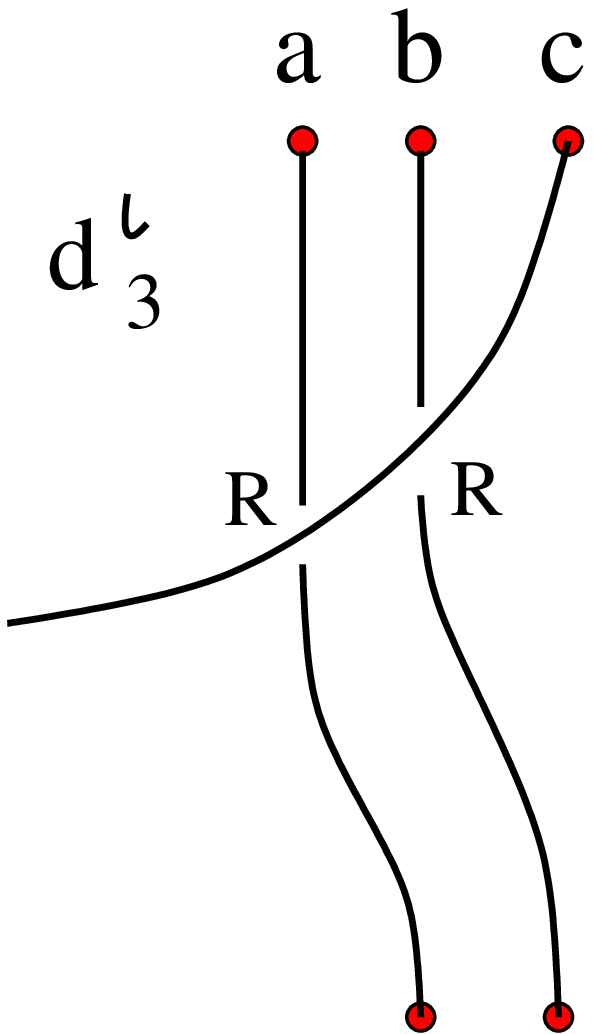,height=1.7cm}}}- {\parbox{1.2cm}{\psfig{figure=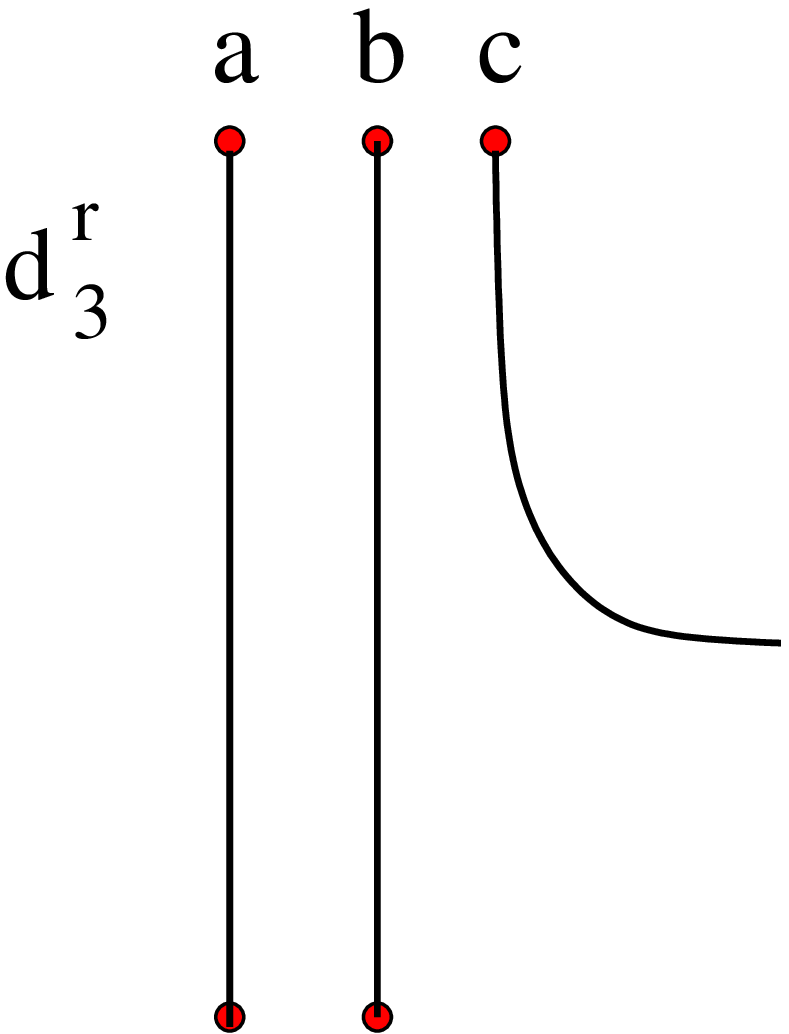,height=1.7cm}}}-
{\parbox{1.2cm}{\psfig{figure=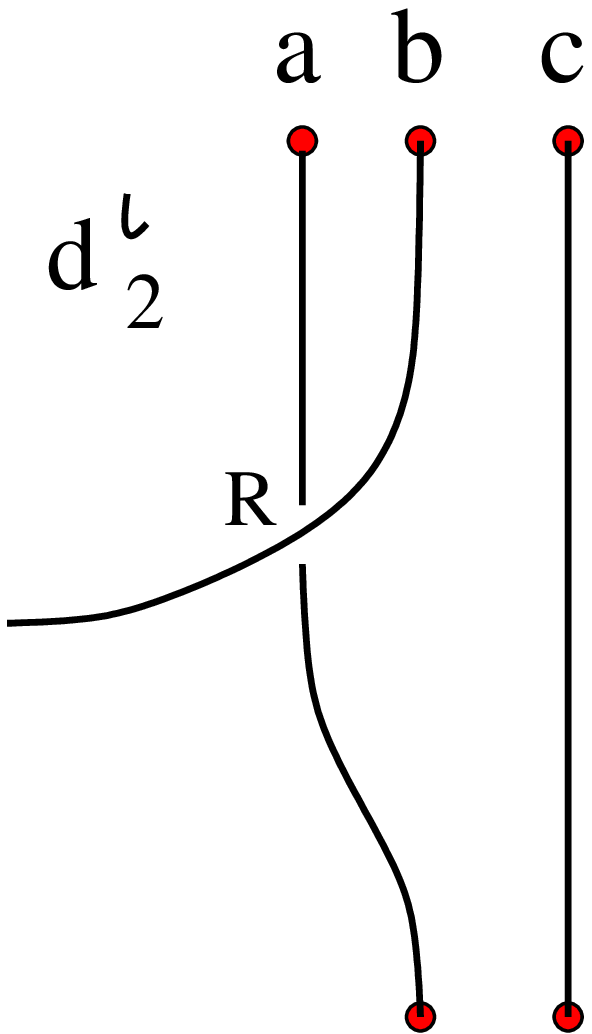,height=1.7cm}}}-{\parbox{1.2cm}{\psfig{figure=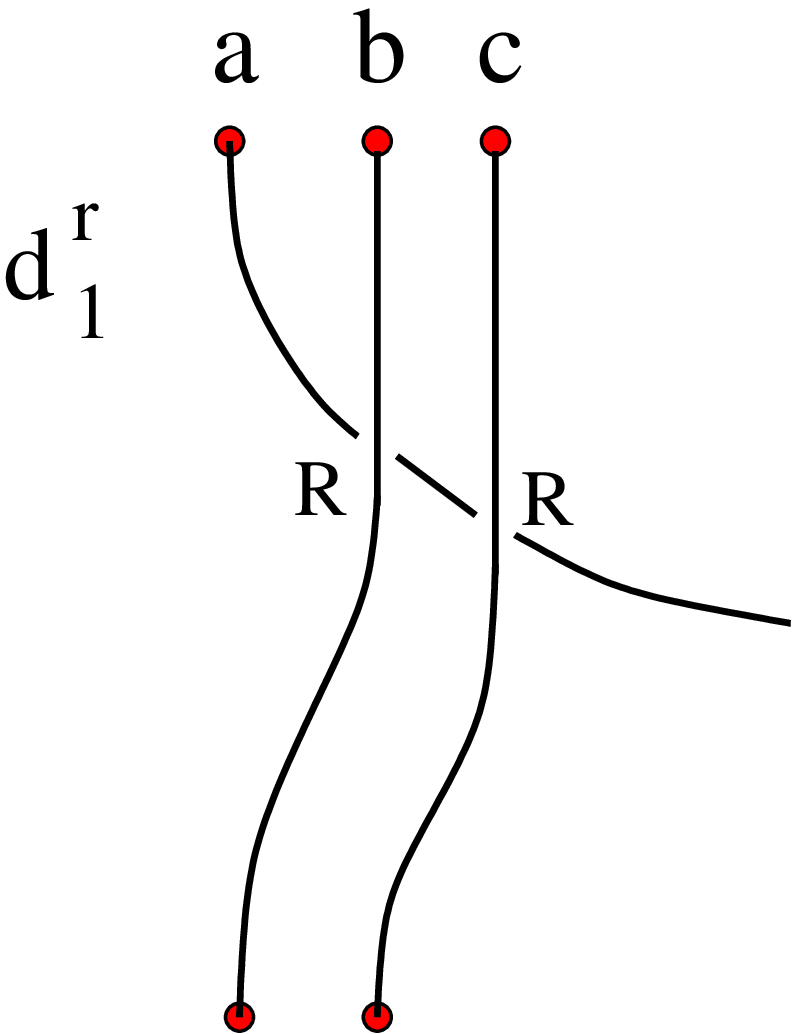,height=1.7cm}}}$}\\

From this diagrams we compute (keeping the terms in the same order as in figures):
$$\partial_3(a,b,c)= (b,c)+\big(y^2(a,c)+ (1-y^2)(ab)\big)+$$
$$\bigg(y^4(a,b)+y^2(1-y^2)(c,b)+(1-y^2)y^2(a,c)+(1-y^2)^2(b,c)\bigg) - $$
$$(a,b)-\big(y^2(a,c)+ (1-y^2)(b,c)\big) -$$
$$\bigg(y^4(b,c) +y^2(1-y^2)(b,a)+(1-y^2)y^2(a,c)+(1-y^2)^2(a,b)\bigg)=$$
$$(1-y^2)\bigg((b,c)-(a,b) +y^2((c,b)-(b,a))\bigg).$$
The longest calculation is that of $d_3^{\ell}$ and $d_1^{r}$. In the next picture we illustrate how to compute 
quickly $d_3^{\ell}$:

\centerline{\psfig{figure=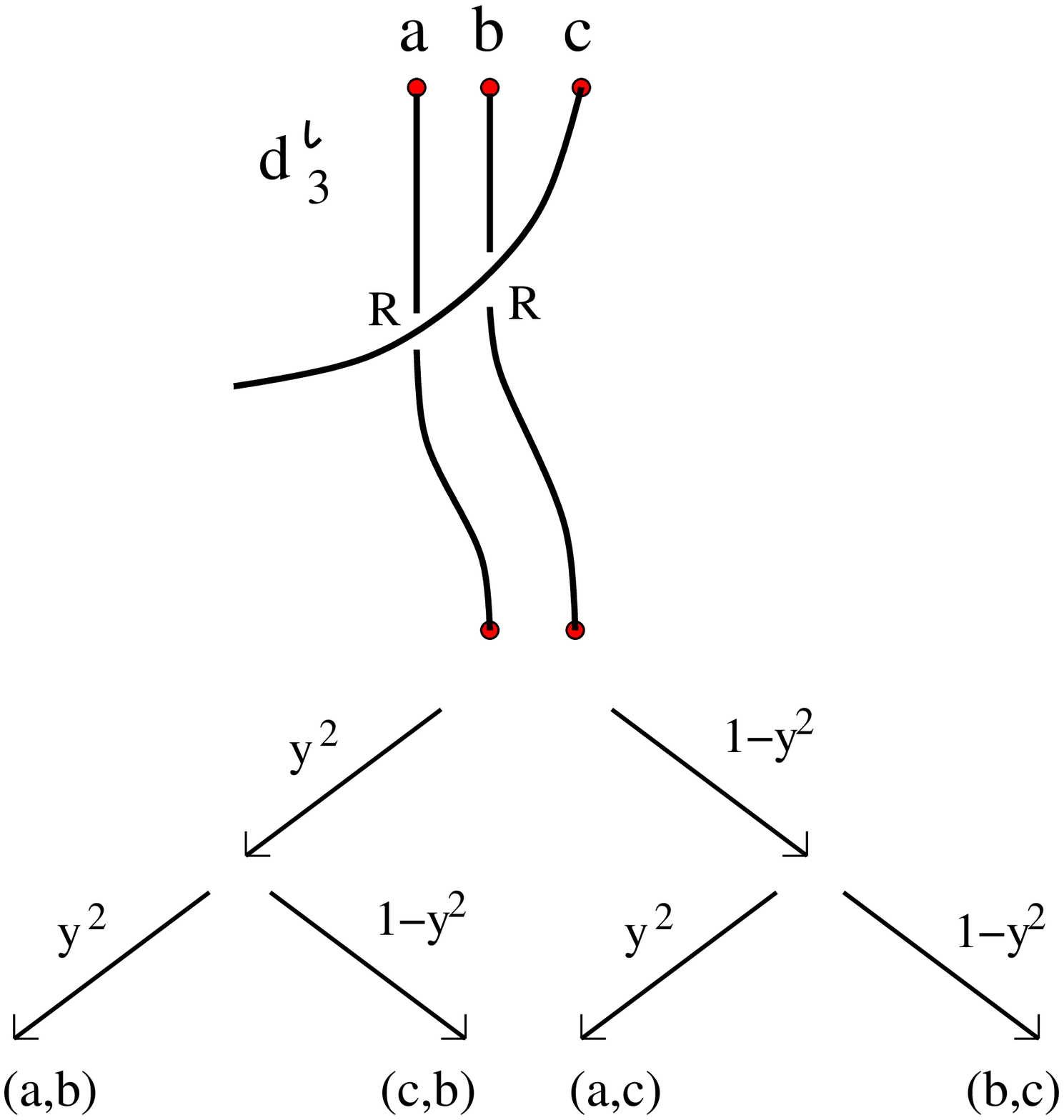,height=13.3cm}\label{tree}}
\centerline{ Figure \ref{tree} $d_3^{\ell}=y^4(a,b)+y^2(1-y^2)(c,b)+(1-y^2)y^2(a,c)+(1-y^2)^2(b,c)$}
\ \\
\ \\
The computation for $d_1^r$ is similar. We can also use the symmetry that is $a$ and $c$ switch roles and $(x,y)$ goes to $(y,x)$.
Thus we get
$d_1^{r}=y^4(b,c)+y^2(1-y^2)(b,a)+(1-y^2)y^2(a,c)+(1-y^2)^2(a,b).$

\ \\
With some efforts, we get the following non-trivial differentials of elements with three distinct letters:
$$\partial_3(a,b,c)=(1-y^2)\bigg((b,c)-(a,b) +y^2((c,b)-(b,a))\bigg).$$
$$\partial_3(a,c,b)=(1-y^2)\bigg((b,c)-(a,c) +y^2((c,b)-(c,a))\bigg).$$
$$\partial_3(b,a,c)=(1-y^2)\bigg((a,c)-(a,b) +y^2((c,a)-(b,a))\bigg).$$
They are not independent as:
$$ \partial_3(a,b,c) - \partial_3(a,c,b) - \partial_3(b,a,c)=0.$$

Also, by Proposition \ref{leq}, we have:
$$\partial_3(a,a,b)=(1-y^2)\bigg((a,b)-(a,a) +y^2((b,a)-(a,a))\bigg).$$
$$\partial_3(a,b,b)=(1-y^2)\bigg((b,b)-(a,b) +y^2((b,b)-(b,a))\bigg).$$
From the following two equations, we see that the relations given by $\partial_{3}$ are generated by the images of $(a,a,b)$ and $(a,b,b)$ as follows.
$$\partial_3(a,b,c)= (1-y^2)\bigg((b,c)-(a,b) +y^2((c,b)-(b,a))\bigg)= \partial_3(b,b,c)+ \partial_3(a,b,b),$$
and 
$$\partial_3(b,a,c)= (1-y^2)\bigg((a,c)-(a,b) +y^2((c,a)-(b,a))\bigg)= \partial_3(a,a,c) - \partial_3(a,a,b).$$
\ \\
Let us summarize the structure of the image $\partial_3(C_3)$. It is generated by 
$$\partial_3(v_i,v_i,v_j)=(1-y^2)\big((v_i,v_j)- (v_i,v_i)+y^2((v_j,v_i)-(v_i,v_i)\big) \mbox{ for $i<j$},$$
and 
$$\partial_3\big((v_i,v_i,v_j)+(v_i,v_j,v_j)\big)= (1-y^4)\big((v_j,v_j)-(v_i,v_i)\big) \mbox{ for $i<j$}.$$
We notice quickly that $\partial_3\big((v_i,v_i,v_j)+(v_i,v_j,v_j)\big)$ is generated by $m-1$ elements $(v_j,v_j)-(v_1,v_1)$ with $m\geq j >1$. \\

	Consider the following new basis of $kX^{2}$ consisting of three groups of basis elements:
	$$X_0=\{(v_1,v_1), (v_j,v_i) \mbox{ for $i<j$}\} \mbox{ that is ${m \choose 2}+1$ elements}.$$
	$$X_1=\{(v_i,v_j)- (v_i,v_i)+y^2((v_j,v_i)-(v_i,v_i)) \mbox{ for $i<j$}\} \mbox{ that is ${m \choose 2}$ elements}.$$
	$$X_2=\{(v_j,v_j)-(v_1,v_1) \mbox{ with $m\geq j >1$}\} \mbox{ that is $m-1$ elements} .$$
	Clearly $X_0\sqcup X_1 \sqcup X_2$ form a basis of $kX^2$.\\
	We look now at relations: in our basis, the matrix of relations is diagonal with $0$ for elements in $X_0$, $1-y^2$ for elements in $X_1$, and $(1-y^4)$ for elements in $X_2$. Thus not only we proved that 
$$H_2(R)= k^{1+{m \choose 2}} \oplus \bigg(k/(1-y^2)\bigg)^{m \choose 2} \oplus \bigg(k/(1-y^4)\bigg)^{m-1}.$$
but we also found a basis of $C_2= kX^2$ realizing the decomposition into cyclic submodules.
\end{proof}

From Theorem \ref{Theorem 5.1}, we can easily see the rank of $ker \partial_{3}(R_{(m)})$.

\begin{corollary}\label{ker}

$Rank(ker \partial_{3}(R_{(m)}))=\frac{(m+1)(2m^{2}-3m+2)}{2}.$

\end{corollary}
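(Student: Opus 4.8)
The plan is to obtain the rank of $\ker\partial_3$ by a straightforward rank--nullity count, reading off the rank of $\mathrm{im}\,\partial_3$ directly from the proof of Theorem \ref{Theorem 5.1}. Since $k$ (whether $\mathbb{Z}[y^{\pm}]$ or $\mathbb{Z}[y]$) is an integral domain, ranks are best computed after tensoring with the fraction field $K=\mathrm{Frac}(k)$, where tensoring is exact and rank--nullity gives
$$\mathrm{Rank}(\ker\partial_3)=\mathrm{Rank}(C_3(R_{(m)}))-\mathrm{Rank}(\mathrm{im}\,\partial_3).$$
First I would record the trivial input that $C_3(R_{(m)})=V_m^{\otimes 3}$ is free of rank $m^3$.

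Next I would extract the rank of the image from the proof of Theorem \ref{Theorem 5.1}. There the submodule $\partial_3(C_3)$ is described, relative to the basis $X_0\sqcup X_1\sqcup X_2$ of $C_2=kX^2$, by a diagonal matrix of relations whose nonzero entries are $1-y^2$ on each of the ${m\choose 2}$ elements of $X_1$ and $1-y^4$ on each of the $m-1$ elements of $X_2$. Because $1-y^2$ and $1-y^4$ are nonzero in the domain $k$, hence units in $K$, the corresponding rows remain linearly independent over $K$, so
$$\mathrm{Rank}(\mathrm{im}\,\partial_3)={m\choose 2}+(m-1).$$

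Finally, the conclusion is a routine simplification:
$$\mathrm{Rank}(\ker\partial_3)=m^3-{m\choose 2}-(m-1)=m^3-\frac{m^2+m-2}{2}=\frac{2m^3-m^2-m+2}{2}=\frac{(m+1)(2m^2-3m+2)}{2},$$
which is exactly the claimed formula. There is no serious obstacle here: the only point deserving care is the observation that $1-y^2$ and $1-y^4$ do not vanish in $k$, so that they contribute to the \emph{rank} of the image rather than only to the torsion of $H_2(R)$; this is what guarantees that the free rank of $\mathrm{im}\,\partial_3$ equals the number of its generators ${m\choose 2}+(m-1)$, and the rest is the factorization displayed above.
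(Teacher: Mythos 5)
Your proposal is correct and follows essentially the same route as the paper: the paper also computes $\mathrm{Rank}(\ker\partial_3)$ as the rank of $C_3$ (namely $m^3$) minus the number of nonzero entries, ${m\choose 2}+(m-1)$, in the diagonal relation matrix obtained in the proof of Theorem \ref{Theorem 5.1}. Your additional remarks (passing to the fraction field and noting that $1-y^2$, $1-y^4$ are nonzero, hence contribute to the rank of the image rather than only to torsion) simply make precise what the paper leaves implicit.
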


\begin{proof}

Rank of the kernal $\partial_{3}$ is the rank of $C_{3}$ minus the number of nonzero elements in the diagonal relation matrix of $\partial_{3}$, which is exactly the numbers of $(1-y^{2})$ and $(1-y^{4})$. Thus $$Rank(ker \partial_{3}(R_{(m)}))=m^{3}-{m \choose 2}-(m-1)=\frac{(m+1)(2m^{2}-3m+2)}{2}$$

\end{proof}

\section{Further computations and future work}

Here we summarise all data obtained with the help of computer. Because of the limitation of the computation program, the computation were done over the ring $\mathbb{Q}[y]$. In \cite{PW}, we formulated a conjecture about the homology of $R_{(m)}$ when $m=2$ as follows, 
\begin{conjecture}\label{PW}\cite{PW}
When $m=2,$ $H_{n}=k^{2}\bigoplus (k/(1-y^{2}))^{a_{n}}\bigoplus (k/(1-y^{4}))^{s_{n-2}},$ where $s_{n}=\Sigma_{i=1}^{n+1}f_{i}$ is the partial sum of Fibonacci sequence, where $f_{1}=1=f_{2}$ and $a_{n}$ is given by $2^{n}=2+a_{n-1}+s_{n-3}+a_{n}+s_{n-2}$ with $a_{1}=0.$
\end{conjecture}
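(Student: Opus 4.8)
The plan is to prove Conjecture \ref{PW} by working over a principal ideal domain and reading off the module $H_n(R_{(2)})$ from its behaviour over the few special residue fields. Since every entry of $R_{(2)}$ in Theorem \ref{Column unital} involves only $y^2$, the whole chain complex is defined over $\mathbb{Q}[t]$ with $t=y^2$; there $1-y^2=1-t$ and $1-y^4=1-t^2=(1-t)(1+t)$, so Conjecture \ref{PW} reads $H_n=k^2\oplus(k/(1-t))^{a_n}\oplus(k/(1-t^2))^{s_{n-2}}$ over $k=\mathbb{Q}[t]$, and the only possible torsion primes are $(1-t)$ and $(1+t)$. Writing $C_\ast$ for this complex ($C_n=kX^n$ of rank $2^n$), the structure theorem reduces the conjecture to three assertions: (i) the free rank of $H_n$ is $2$ for all $n\ge1$; (ii) the $(1-t)$- and $(1+t)$-primary parts of the torsion are each a sum of \emph{length-one} cyclic modules; and (iii) the numbers of $(1-t)$- and $(1+t)$-primary summands are $a_n+s_{n-2}$ and $s_{n-2}$ respectively.

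First I would establish the free rank and locate the torsion. The diagonal chains $(v_1,\dots,v_1)$ and $(v_2,\dots,v_2)$ are cycles in every degree, since $R(v_i,v_i)=(v_i,v_i)$ forces $d^{\ell}_{j,n}=d^{r}_{j,n}$ on them (this is the mechanism behind $\partial_2=0$ in Lemma \ref{boundary2} and behind Proposition \ref{leq}); checking that they are not boundaries over $\mathbb{Q}(t)$ gives free rank at least $2$. For the matching upper bound I would prove $H_n\otimes\mathbb{Q}(t)=\mathbb{Q}(t)^2$, equivalently the rank identity $\operatorname{rank}\partial_n+\operatorname{rank}\partial_{n+1}=2^n-2$ bootstrapped from $\partial_1=\partial_2=0$, by producing a contracting homotopy on the generic fibre off the two diagonal classes. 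The same generic computation, run at any prime other than $(1-t)$ and $(1+t)$, shows the localization is torsion-free there, which confirms that only these two primes occur.

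The core of the argument is to compute the homology over the two special residue fields, both of which are $\mathbb{Q}$, and to feed the answers into the universal coefficient sequence $H_n(C\otimes\kappa)\cong(H_n\otimes\kappa)\oplus\operatorname{Tor}_1(H_{n-1},\kappa)$. At $t=1$ the operator degenerates to the transposition $\tau(v_i\otimes v_j)=v_j\otimes v_i$, and I would show that the Yang--Baxter differential of $\tau$ vanishes in every degree, i.e. $d^{\ell}_{i,n}=d^{r}_{i,n}$ for $\tau$ (provable from the graphical calculus and the square-bracket symmetry recorded in the Remark after Theorem \ref{Column unital}); this extends the vanishing already seen in Lemma \ref{boundary2} and in the overall factor $(1-y^2)$ of $\partial_3$ in Theorem \ref{Theorem 5.1}, and yields $\dim_{\mathbb{Q}}H_n(C|_{t=1})=2^n$. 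At $t=-1$, where $R(v_1,v_2)=2(v_1,v_2)-(v_2,v_1)$, I would prove $\dim_{\mathbb{Q}}H_n(C|_{t=-1})=f_{n+2}$. Universal coefficients then turn these two dimension formulas into first-order recursions for the primary multiplicities $\mu_n$ (at $1-t$) and $\nu_n$ (at $1+t$), namely $\mu_n+\mu_{n-1}=2^n-2$ and $\nu_n+\nu_{n-1}=f_{n+2}-2$ with $\mu_1=\nu_1=0$. Solving gives $\nu_n=s_{n-2}$ and $\mu_n=a_n+s_{n-2}$, the first recursion being exactly the defining relation $2^n=2+a_{n-1}+s_{n-3}+a_n+s_{n-2}$ of Conjecture \ref{PW}; this proves (i) and (iii).

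Two steps carry the genuine difficulty. The principal obstacle is the count $\dim H_n(C|_{t=-1})=f_{n+2}$: the Fibonacci pattern strongly suggests that the homology at $t=-1$ has a basis indexed by length-$n$ words in $\{v_1,v_2\}$ satisfying one local constraint (no two consecutive equal letters of a prescribed kind), and the real work is to produce such a basis, or equivalently a chain deformation retraction of $C|_{t=-1}$ onto a subcomplex of Fibonacci size with recursion mirroring $f_{n+2}=f_{n+1}+f_n$. The second difficulty is the length-one refinement (ii), which the universal coefficient counts do not see: to exclude summands such as $k/(1-t)^2$ I would show that $1-t^2=1-y^4$ annihilates $\operatorname{Tors}(H_n)$, whence each primary part is killed by a single prime factor and is therefore elementary. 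The most decisive route to all of (i)--(iii) at once would be to construct, inductively in $n$ and in the spirit of the basis $X_0\sqcup X_1\sqcup X_2$ of Theorem \ref{Theorem 5.1}, an adapted basis of $C_n$ on which $\partial_n$ is diagonal with entries in $\{1,1-t,1-t^2,0\}$; I would first calibrate the predicted recursion against the low-degree computer data before attempting the general construction. Finally, since the complex already lives over $\mathbb{Z}[t]$, this scheme proves Conjecture \ref{PW} over $\mathbb{Q}[t]$, and promoting it to $k=\mathbb{Z}[y^{\pm}]$ would require separately ruling out $2$-torsion.
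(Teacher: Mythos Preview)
The statement you attempt to prove is a \emph{conjecture} in the paper, not a theorem: the authors do not supply a proof, only the remark that it has been verified by computer for $n\le 11$ and a brief reference to the discussion in \cite{ESZ}. There is therefore no proof in the paper against which to compare your attempt.

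Your proposal is itself not a proof but an outline, and you are candid about this: the two load-bearing steps---the Fibonacci count $\dim_{\mathbb{Q}}H_n(C|_{t=-1})=f_{n+2}$ and the length-one refinement excluding summands like $k/(1-t)^2$---are stated as goals (``I would prove'', ``the real work is to produce such a basis'') rather than carried out. The reduction via universal coefficients is sound and the arithmetic checks: with $\mu_n=a_n+s_{n-2}$ and $\nu_n=s_{n-2}$ one does get $\mu_n+\mu_{n-1}=2^n-2$ (the defining recursion in the conjecture) and $\nu_n+\nu_{n-1}=f_{n+2}-2$ via $s_{n-2}=f_{n+1}-1$. The vanishing at $t=1$ is exactly Remark~\ref{1-y^{2}} of the paper, so that piece is already in place. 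But the Fibonacci computation at $t=-1$ is a genuinely hard combinatorial statement about the homology of a specific non-set-theoretic Yang--Baxter operator, and nothing in the paper (or in your outline) supplies it; likewise the exclusion of higher torsion lengths is a separate obstruction that the UCT counts alone cannot resolve. In short, your strategy is a reasonable framework for attacking the conjecture, but it does not close the gap the paper leaves open.
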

This conjecture is verified upto $n\leq 11$ by computer.  In the paper \cite{ESZ}, there is a discussion of various aspects of this conjecture. More computation is shown in Table \ref{Data}, where $(x,y,z)$ represents decomposition into $x$ copies of $k$, $y$ copies of $k/(1-y^{2})$ torsion and $z$ copies of $k/(1-y^{4})$ torsion.

\begin{center}
 \begin{tabular}{||c c c c c c||} 
 \hline
 $H_{n}$ & m=3 & m=4 & m=5 & m=6 & m=7 \\ [0.5ex] 
 \hline
 $H_{2}$ & (4,3,2) & (7,6,3) & (11,10,4) & (16,15,5) & (22,21,6)\\ 
 \hline
 $H_{3}$ & (4,12,6) & (8,35,12) & (15,76,20) & (26,140,30) & (42,232,42) \\ [1ex] 
 \hline
\end{tabular}\label{Data}
\centerline{Table \ref{Data}}
\end{center}

From the first row of the table, we can see that the results match with that of the formula in Theorem \ref{Theorem 5.1}.  From the second row of the table, we conjecture the formula for $H_{3}$ as follows,

\begin{conjecture}\label{H_{3}}

$H_{3}(R_{(m)})=k^{\frac{m(8-3m+m^{2})}{6}}\bigoplus (k/(1-y^{2}))^{\frac{(m^{2}-1)(5m-6)}{6}}\bigoplus (k/(1-y^{4}))^{m(m-1)}$

\end{conjecture}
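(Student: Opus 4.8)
The goal is to compute $H_3(R_{(m)}) = \ker\partial_3/\operatorname{im}\partial_4$, so the plan follows the template of Theorem \ref{Theorem 5.1} one homological degree higher. Three ingredients are already in hand: $\partial_2 = 0$ (Lemma \ref{boundary2}), the explicit generators of $\operatorname{im}\partial_3$ together with the diagonal relation matrix from the proof of Theorem \ref{Theorem 5.1}, and the rank $\operatorname{rank}(\ker\partial_3) = \frac{(m+1)(2m^2 - 3m + 2)}{2}$ from Corollary \ref{ker}. Since the free rank of $H_3$ equals $\operatorname{rank}(\ker\partial_3) - \operatorname{rank}(\partial_4)$ over the fraction field, the real content is to pin down $\operatorname{im}\partial_4$ as a submodule of $\ker\partial_3$ and to exhibit a basis of $\ker\partial_3$ in which the relations imposed by $\partial_4$ are diagonal. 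I would first record the decisive consistency check that guides the whole computation: the claimed free rank $\frac{m(m^2-3m+8)}{6}$ plus the claimed total torsion $\frac{(m^2-1)(5m-6)}{6} + m(m-1) = \frac{5m^3 - 11m + 6}{6}$ sum exactly to $\operatorname{rank}(\ker\partial_3)$. Hence $\partial_4$ must contribute \emph{no} unit entries in its Smith normal form: every relation it imposes is a genuine $(1-y^2)$- or $(1-y^4)$-torsion relation, and the whole problem reduces to counting these two types and proving they occur in the stated multiplicities.

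Next I would compute $\partial_4(a_1,a_2,a_3,a_4)$ from the graphical face maps $d_i^{\ell}, d_i^{r}$ exactly as $d_3^{\ell}$ and $d_1^{r}$ were computed in the proof of Theorem \ref{Theorem 5.1}, using two economies. First, the obvious analogue of Lemma \ref{boundary3}: writing $\partial_4(v_m, a_1,a_2,a_3) = [d_1^{\ell} - d_1^{r}](v_m,a_1,a_2,a_3) - v_m\otimes\partial_3(a_1,a_2,a_3)$ and using $R(v_m,x) = (x,v_m)$ shows that tuples whose first letter is maximal (or whose last letter is minimal) reduce to strictly lower data, and the subcomplex argument of Lemma \ref{boundary3}(2) propagates this to all tuples with an extremal repeated letter. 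Second, the left--right symmetry used in the proof of Theorem \ref{Theorem 5.1} --- the outer letters switch roles and each pair $(x,y)$ is reversed --- relates $d_i^{\ell}$ to $d_{5-i}^{r}$ and halves the remaining case list. What survives are the order types of $4$-tuples built from at most four distinct letters, and on each I would record $\partial_4$ as an explicit element of $C_3$.

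The heart of the argument is then to organize $\operatorname{im}\partial_4$ the way $\operatorname{im}\partial_3$ was organized. In Theorem \ref{Theorem 5.1} the image split into ${m \choose 2}$ generators $\partial_3(v_i,v_i,v_j)$ carrying $(1-y^2)$-torsion and the combinations $\partial_3\big((v_i,v_i,v_j)+(v_i,v_j,v_j)\big)$ carrying $(1-y^4)$-torsion. I expect the same mechanism here: a family of $\partial_4$-images each divisible by exactly $(1-y^2)$, together with distinguished linear combinations --- the level-$4$ analogues of $(v_i,v_i,v_j)+(v_i,v_j,v_j)$ --- on which the common factor improves to $(1-y^4)$. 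Counting the first family and subtracting the dependencies among them (the analogues of $\partial_3(a,b,c) - \partial_3(a,c,b) - \partial_3(b,a,c) = 0$) should yield the $(1-y^2)$-multiplicity $\frac{(m^2-1)(5m-6)}{6}$, while the $(1-y^4)$ combinations should collapse, as before, to $m(m-1)$ independent classes. To keep the bookkeeping uniform in $m$ I would run it inductively through the short exact sequence of complexes $0 \to C_*(R_{(m-1)}) \to C_*(R_{(m)}) \to Q_* \to 0$, where $C_*(R_{(m-1)})$ is the subcomplex on $\{v_1,\dots,v_{m-1}\}$ (a genuine subcomplex, since $R$ never introduces the letter $v_m$) and $Q_*$ is spanned by tensors containing $v_m$; the associated long exact sequence should produce the increments $\frac{m^2-3m+4}{2}$, $\frac{5m^2-9m+2}{2}$, and $2(m-1)$ in the three summands, which are precisely the first differences of the claimed polynomials.

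The main obstacle is the torsion split, not the ranks. The rank of $\partial_4$, hence the free rank of $H_3$, follows mechanically once the boundary is written down and the consistency identity above is invoked; but separating the $(1-y^2)$-part from the $(1-y^4)$-part requires producing, for general $m$, the explicit change of basis of $\ker\partial_3$ making the relation matrix diagonal with entries in $\{0,\, 1-y^2,\, 1-y^4\}$. Identifying which combinations of $\partial_4$-images acquire the extra factor $(1+y^2)$ --- upgrading $1-y^2$ to $1-y^4$ --- is the delicate step, and showing that exactly $m(m-1)$ independent such upgrades occur, no more and no fewer, is where the argument will demand the most care. A full proof would conclude by checking the resulting counts against Table \ref{Data} for $m=3,\dots,7$, as was done for Theorem \ref{Theorem 5.1}.
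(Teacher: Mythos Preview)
The statement is labeled \emph{Conjecture} in the paper and carries no proof; the authors formulate it from the computer data in the table and record only (in the Remark following the conjecture) that the three claimed ranks sum to $\operatorname{rank}(\ker\partial_3)$ --- precisely the consistency check you reproduce. There is therefore no paper-proof to compare your attempt against.

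Your proposal is a plausible strategy, modeled closely on the $H_2$ computation of Theorem \ref{Theorem 5.1}, but it is not a proof. Every substantive step is conditional: you ``would compute $\partial_4$'', you ``expect the same mechanism'', the counts ``should yield'' the stated multiplicities, the long exact sequence ``should produce'' the correct increments. None of this is executed. The genuine gap is the one you yourself flag at the end: producing, for arbitrary $m$, an explicit basis of $\ker\partial_3$ in which the relation matrix of $\partial_4$ is diagonal with entries in $\{0,\,1-y^2,\,1-y^4\}$ and in the stated multiplicities. For $H_2$ this succeeded because $\operatorname{im}\partial_3$ was generated by two-letter data indexed by pairs $i<j$, with a single linear syzygy per triple; for $\partial_4$ the image involves three- and four-letter order types and a substantially more intricate dependence structure, so the analogy alone is not an argument. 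Until the $\partial_4$ computation and the change of basis are actually carried out, the statement remains open --- exactly as the paper leaves it.
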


\begin{remark}\

\begin{enumerate}

\item The ranks in Conjecture \ref{H_{3}} sum up to the rank of $ker \partial_{3}$.

\item The rank of $H_3(R_{(2)})$ in Conjecture \ref{PW} agrees with the rank of $H_{3}(R_{(2)})$ in Conjecture \ref{H_{3}}.

\end{enumerate}

\end{remark}

Computations and patterns observed so far suggest that there are only two types of torsion elements $k/(1-y^{2})$ and $k/(1-y^{4})$. However, this is only checked upto the strength of computer program.  By analyzing the boundary maps in general, we hope to gain more information about $H_{n}(R_{(m)})$.  The first step towards this goal is the following observation.

\begin{remark}\label{1-y^{2}}

The factor $(1-y^{2})$ divides every element in Im($\partial_{n}$).  This follows from the fact that when setting $1-y^{2}=0$, $d^{\ell}_{i}=d^{r}_{i}$.  Thus, we have $\partial_{n}(a_{1},...,a_{n})\subset (1-y^{2})V^{n}$, where $a_{i}\in X_{m}$, $i=1,2,...,m$.  One possible approach to compute $H_{n}(R_{(m)})$ is to decompose the boundary map along the factors $(1-y^{2})^{i}$. In the first step, we ignore the branches with factor $(1-y^{2})$ in the computational tree, see Figure \ref{tree}.  Generally, in the $i-th$ step, we ignore the paths in the computational tree which are going $i$ or more times to the right.

\end{remark}
\ \\

\section{Acknowledgements}
J.~H.~Przytycki was partially supported by the Simons Collaboration Grant-637794 and GWU, CCAS Enhanced Travel Award.  X. Wang was supported by the National Natural Science Foundation of China (No.11901229).

\end{document}